\newtheorem{thm}{Theorem}
\newtheorem{defn}[thm]{Definition}
\newtheorem{ex}[thm]{Example}
\begin{document}

\title{A new definition of upward planar order}

\author[a]{Ting Li}

\author[b]{Xuexing Lu}

\affil[a,b]{\small School of Mathematical Sciences, Zaozhuang University, China}

\renewcommand\Authands{ and }
\maketitle

\begin{abstract}
We give a more coherent  definition of upward planar order.
\end{abstract}

\text{\textit{Keywords}: upward planar graph, upward planar order }\\

For a finite set $S$ with a linear order $<$ and any subset $X\subseteq S$, we write $X^-=\min{X}$ and $X^+=\max{X}$. The \textbf{convex hull} of $X$ in $S$ is the closed interval $\overline{X} = \{y\in S| X^-\leq y\leq X^+\}$.
In a directed graph, a vertex is called \textbf{processive} if it is neither a source nor a sink. We denote the sets of incoming edges and outgoing edges of a vertex $v$ by $I(v)$ and $O(v)$, respectively, and write $e_1\to e_2$ if there is a directed path starting from edge $e_1$ and ending with $e_2$. The set of incident edges of $v$ is denoted by $E(v)$, which is equal to $I(v)\sqcup O(v)$. The source and target vertices of an edge $e$ are denoted by $s(e)$ and $t(e)$, respectively. For an acyclic directed graph, the reachable order $\to$ is a partial order on its edge set.

\begin{defn}\label{upo}
An \textbf{upward planar order} on acyclic directed graph $G$ is a linear order $\prec$ on the edge set $E(G)$, such that

$(U1)$ $e_1\to e_2$ implies that $e_1\prec e_2$;

$(U2)$ for any vertex $v$, $\overline{I(v)}\cap \overline{O(v)}=\emptyset$ and $\overline{E(v)}=\overline{I(v)}\sqcup \overline{O(v)}$;

$(U3)$ for any two  vertices $v_1$ and $v_2$, $I(v_1)\cap \overline{I(v_2)}\neq\emptyset$ implies that $\overline{I(v_1)}\subseteq \overline{I(v_2)}$, and $O(v_1)\cap \overline{O(v_2)}\neq\emptyset$ implies that $\overline{O(v_1)}\subseteq \overline{O(v_2)}$.
\end{defn}
The $(U1)$ condition just says that $\prec$ is a linear extension of $\to$. For source vertices and sink vertices, the $(U2)$ condition is unconditionally true; for a processive vertex $v$, the $(U2)$ condition just means that $O(v)^-=I(v)^++1$.
Notice that for any two subsets $X, Y$ of a linearly ordered set, $X\subseteq \overline{Y}$ is equivalent to $\overline{X}\subseteq \overline{Y}$, thus the $(U3)$ condition can be restated as that for any two  vertices $v_1$ and $v_2$, $I(v_1)\cap \overline{I(v_2)}\neq\emptyset$ implies that $I(v_1)\subseteq \overline{I(v_2)}$, and $O(v_1)\cap \overline{O(v_2)}\neq\emptyset$ implies that $O(v_1)\subseteq \overline{O(v_2)}$.

Recall that a planar drawing of an acyclic directed graph is called \textbf{upward} if all edges increase monotonically in the vertical direction or any other fixed direction. An acyclic directed graph is called an \textbf{upward planar graph} if it admits an upward planar drawing. An acyclic directed graph together with an upward planar drawing is called an \textbf{upward plane graph}.
The notion of an upward planar order was introduced in \cite{[LY16]} to characterize upward planarity.
The main result of  \cite{[LY16]}, Theorem $6.1$, can be stated as follows.

\begin{thm}
An acyclic directed graph is upward planar  if and only if it admits an upward planar order.
\end{thm}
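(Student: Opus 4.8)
The plan is to prove the two implications separately, after two harmless reductions: treating each connected component independently (a linear order on the whole edge set is obtained by concatenating the orders of the components as disjoint blocks, which preserves $(U1)$--$(U3)$ since no reachability or incidence relation crosses components, so no interval $\overline{I(v_2)}$ can reach into another block), and passing to general position in any drawing so that distinct vertices receive distinct heights.

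For necessity, suppose $G$ is upward planar and fix an upward plane drawing. Since every edge increases monotonically in the fixed direction, the rotation at each vertex $v$ is bimodal: the incoming edges $I(v)$ occupy a contiguous arc (they approach $v$ from below) and the outgoing edges $O(v)$ occupy the complementary arc (they leave $v$ from above), and within each arc the embedding induces a left-to-right order. I would define $\prec$ as the linear extension produced by a leftmost topological sweep: scanning upward, whenever the sweep first meets a vertex $v$ it emits the edges of $I(v)$ in their left-to-right order and then, immediately afterwards, the edges of $O(v)$ in their left-to-right order. One must check that this prescription defines a single globally consistent linear order, which is exactly where planarity enters, since two edges emitted in an inconsistent order would have to cross in the drawing. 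Condition $(U1)$ is then immediate because the sweep is topological, so $e_1\to e_2$ forces $e_1$ to be emitted before $e_2$. Condition $(U2)$ is built into the construction: the in-block and out-block of $v$ are emitted consecutively, giving $O(v)^-=I(v)^++1$, and no further edge can be slipped between them without crossing the bundle at $v$. Condition $(U3)$ records that in a planar drawing the incoming bundles (and dually the outgoing bundles) are laminar: if an edge of $I(v_1)$ lies in the span of $I(v_2)$, then all of $\overline{I(v_1)}$ is nested inside $\overline{I(v_2)}$, for otherwise some edge of $I(v_1)$ would have to escape the region bounded by the extreme edges of $I(v_2)$, forcing a crossing.

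For sufficiency, suppose $\prec$ is an upward planar order and argue by induction on $|E(G)|$. Let $e$ be the $\prec$-maximal edge. By $(U1)$ no edge is reachable from $e$, so $w=t(e)$ is a sink, and $e=O(s(e))^+=I(w)^+$. I would first verify that the restriction of $\prec$ to $E(G)\setminus\{e\}$ is an upward planar order on $G-e$: deleting the global maximum preserves all order relations among the remaining edges, leaves each set $I(v),O(v)$ unchanged except for removing $e$ from $O(s(e))$ and from $I(w)$, and hence preserves $(U1)$, the consecutiveness $(U2)$ at $s(e)$ (its out-block loses only its top element, so $O(s(e))^-=I(s(e))^++1$ still holds) and at $w$ (a sink, for which $(U2)$ is vacuous), together with the laminarity $(U3)$. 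By the inductive hypothesis $G-e$ admits an upward plane drawing inducing $\prec|_{G-e}$, and it remains to reinsert $e$ as the last, outermost edge, routed monotonically from $s(e)$ up to the sink $w$ along the right part of the outer boundary.

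The main obstacle is precisely this reinsertion step. What must be shown is that $s(e)$ and $w$ are simultaneously accessible from the outer face of the drawing of $G-e$ on the correct (right) side, so that $e$ can be added monotonically and crossing-free. This is what maximality together with $(U3)$ should guarantee: because $e$ is the largest outgoing edge at $s(e)$ and the largest incoming edge at $w$, the nesting condition forces every potentially obstructing edge to lie strictly to the left, leaving the upper-right region free. Making this accessibility statement precise---identifying the exact stretch of outer boundary carved out by the $\prec$-maximal edge and proving it is unobstructed---is the crux, and the bimodality and laminarity encoded in $(U2)$ and $(U3)$ are exactly the hypotheses that let it go through.
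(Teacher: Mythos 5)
You should first note that the paper you were given does not actually prove this theorem: it quotes it as Theorem $6.1$ of \cite{[LY16]}, so your attempt has to be measured against the proof there, and against internal consistency with Definition \ref{upo}. Measured that way, your necessity direction contains a genuine error, not just a gap. Your sweep is not well-defined: every edge $e$ belongs to both $O(s(e))$ and $I(t(e))$, so the prescription ``emit $I(v)$, then $O(v)$, when the sweep first meets $v$'' emits each edge twice. Worse, any disambiguation under which $I(v)$ and $O(v)$ come out as \emph{consecutive blocks} demands more than $(U2)$ actually grants --- $(U2)$ only says $O(v)^-=I(v)^++1$ for processive $v$, and the hulls $\overline{I(v)}$, $\overline{O(v)}$ typically contain edges not incident to $v$ (in the paper's first example the top vertex has $I(v)=\{1,5\}$, whose hull contains $2,3,4$) --- and is in fact unsatisfiable. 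Concretely, take edges $u\to v$, $u\to z$, $v\to w$, drawn upward with $v$ to the left of $z$. $(U1)$ forces $uv\prec vw$, and $(U2)$ at the processive vertex $v$ forbids $uz$ from lying strictly between them, so the only admissible orders are $uz\prec uv\prec vw$ and $uv\prec vw\prec uz$; your sweep emits $O(u)$ left-to-right as a block and produces $uv\prec uz\prec vw$, which violates $(U2)$. The order attached to a drawing is depth-first/leftmost-path-like, and in \cite{[LY16]} it is defined as the union of the reachability relation $\to$ with a left-of relation on edges; the substantive work, which your ``one must check\dots where planarity enters'' elides, is precisely proving that this union is a total, transitive order satisfying $(U2)$ and $(U3)$.

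Your sufficiency direction is better structured but incomplete at exactly the point you flag. Deleting the $\prec$-maximal edge $e$ is sound: $t(e)$ is a sink by $(U1)$, and $(U2)$ survives since removing the global maximum cannot place a new edge strictly between $I(v)^+$ and $O(v)^-$. But $(U3)$ does not survive ``for free'' as you assert: if $e=I(v_2)^+$ is deleted and some $g\in I(v_1)$ lay strictly between the new maximum of $I(v_2)\setminus\{e\}$ and $e$, you must rule this out, e.g.\ by applying $(U3)$ with the roles of $v_1,v_2$ swapped to get $I(v_2)\subseteq\overline{I(v_1)}$, whence $e\in\overline{I(v_1)}$ forces $e=I(v_1)^+\in I(v_1)$, a contradiction --- a short argument, but one that needs writing. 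The real gap is the reinsertion step: your induction hypothesis must be strengthened from ``$G-e$ admits an upward planar order'' to ``$G-e$ admits an upward drawing \emph{realizing} $\prec|_{G-e}$ with $s(e)$ and $t(e)$ simultaneously accessible from the outer face on the right.'' Formulating ``realizing'' already presupposes the necessity-direction correspondence you have not established, and the accessibility claim is the actual content of the theorem rather than a routine consequence of the laminarity in $(U3)$; you concede this yourself. So the proposal is a plausible skeleton, but its two load-bearing steps --- linearity of the drawing-induced order, and outer-face accessibility of the maximal edge's endpoints --- are missing, and the proof in \cite{[LY16]} is correspondingly far more involved.
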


\begin{ex}
The following shows an example of an upward plane graph with its associated upward planar order, which is represented by the natural numbers labelling the edges.
\begin{center}
\begin{tikzpicture}[scale=0.27]

\node (v2) at (0.5,2) {};
\node (v1) at (-3,-3) {};
\node (v3) at (3,-2.5) {};
\node (v4) at (7,0) {};
\node (v8) at (0.5,-2) {};
\node (v5) at (-0.5,-7.5) {};
\node (v6) at (3.5,-5.5) {};
\node (v7) at (6,-8.5) {};
\draw  (-3,-3) -- (0.5,2)[postaction={decorate, decoration={markings,mark=at position .5 with {\arrowreversed[black]{stealth}}}}];
\draw  (3,-2.5) -- (0.5,2)[postaction={decorate, decoration={markings,mark=at position .5 with {\arrowreversed[black]{stealth}}}}];
\draw  (3,-2.5) -- (7,0)[postaction={decorate, decoration={markings,mark=at position .5 with {\arrowreversed[black]{stealth}}}}];
\draw  (-0.5,-7.5) -- (3.5,-5.5)[postaction={decorate, decoration={markings,mark=at position .5 with {\arrowreversed[black]{stealth}}}}];
\draw   (6,-8.5) -- (3.5,-5.5)[postaction={decorate, decoration={markings,mark=at position .5 with {\arrowreversed[black]{stealth}}}}];
\draw  (-0.5,-7.5) -- (-3,-3)[postaction={decorate, decoration={markings,mark=at position .5 with {\arrowreversed[black]{stealth}}}}];
\draw  (-0.5,-7.5) -- (0.5,-2)[postaction={decorate, decoration={markings,mark=at position .5 with {\arrowreversed[black]{stealth}}}}];
\draw  (3.5,-5.5)-- (0.5,-2)[postaction={decorate, decoration={markings,mark=at position .5 with {\arrowreversed[black]{stealth}}}}];
\draw   (6,-8.5) -- (7,0)[postaction={decorate, decoration={markings,mark=at position .5 with {\arrowreversed[black]{stealth}}}}];
\node (v9) at (2,-11) {};
\draw  (2,-11) --  (6,-8.5)[postaction={decorate, decoration={markings,mark=at position .5 with {\arrowreversed[black]{stealth}}}}];
\draw  (2,-11) -- (-0.5,-7.5)[postaction={decorate, decoration={markings,mark=at position .5 with {\arrowreversed[black]{stealth}}}}];
\draw  (3.5,-5.5) -- (7,0)[postaction={decorate, decoration={markings,mark=at position .5 with {\arrowreversed[black]{stealth}}}}];
\draw[fill] (v1) circle [radius=0.2];
\draw[fill] (v2) circle [radius=0.2];
\draw[fill] (v3) circle [radius=0.2];
\draw[fill] (v4) circle [radius=0.2];
\draw[fill] (v5) circle [radius=0.2];
\draw[fill] (v6) circle [radius=0.2];
\draw[fill] (v7) circle [radius=0.2];
\draw[fill] (v8) circle [radius=0.2];
\draw[fill] (v9) circle [radius=0.2];
\draw  plot[smooth, tension=.7] coordinates {(7,0) (8,-2.5) (8.5,-5.5) (7.5,-7.5) (6,-8.5)}[postaction={decorate, decoration={markings,mark=at position .5 with {\arrow[black]{stealth}}}}];
\node [scale=0.68]at (-1.9,0) {$1$};
\node [scale=0.68]at (-2.5,-5.6) {$2$};
\node [scale=0.68]at (-0.5,-4.3) {$3$};
\node [scale=0.68]at (2.1,-2.9) {$4$};
\node [scale=0.68]at (2.2,0.5) {$5$};
\node [scale=0.68]at (4.8,-0.7) {$6$};
\node [scale=0.68]at (4.1,-3.7) {$7$};
\node [scale=0.68]at (1.9,-7.2) {$8$};
\node [scale=0.68]at (0.1,-9.7) {$9$};
\node [scale=0.68]at (4.2,-7.4) {$10$};
\node[scale=0.68] at (5.7,-5) {$11$};
\node [scale=0.68]at (9.4,-4.5) {$12$};
\node [scale=0.68]at (4.4,-10.5) {$13$};

\node (v12) at (14,0.25) {};
\node (v13) at (13,-11) {};
\draw[fill] (v12) circle [radius=0.2];
\draw[fill] (v13) circle [radius=0.2];
\node (v14) at (11.25,-5.25) {};
\node (v15) at (15.25,-6.5) {};
\draw (14,0.25)  -- (11.25,-5.25)[postaction={decorate, decoration={markings,mark=at position .5 with {\arrow[black]{stealth}}}}];
\draw (11.25,-5.25) -- (13,-11)[postaction={decorate, decoration={markings,mark=at position .5 with {\arrow[black]{stealth}}}}];
\draw (14,0.25) -- (15.25,-6.5)[postaction={decorate, decoration={markings,mark=at position .5 with {\arrow[black]{stealth}}}}];
\draw (15.25,-6.5)-- (13,-11)[postaction={decorate, decoration={markings,mark=at position .5 with {\arrow[black]{stealth}}}}];
\node (v16) at (13.25,-3) {};
\node (v17) at (12.75,-8) {};
\draw  (13.25,-3) --(12.75,-8)[postaction={decorate, decoration={markings,mark=at position .5 with {\arrow[black]{stealth}}}}];
\draw[fill] (v14) circle [radius=0.2];
\draw[fill] (v15) circle [radius=0.2];
\draw[fill] (v16) circle [radius=0.2];
\draw[fill] (v17) circle [radius=0.2];
\node[scale=0.68] at (12,-2) {$14$};
\node[scale=0.68] at (11.25,-8.75) {$15$};
\node[scale=0.68] at (13.75,-5.75) {$16$};
\node[scale=0.68] at (15.25,-3) {$17$};
\node [scale=0.68] at (14.75,-9.25) {$19$};
\node (v18) at (17.75,-3.5) {};
\node (v19) at (17,-9) {};
\draw (17.75,-3.5)-- (17,-9)[postaction={decorate, decoration={markings,mark=at position .5 with {\arrow[black]{stealth}}}}];
\draw[fill] (v18) circle [radius=0.2];
\draw[fill] (v19) circle [radius=0.2];
\node [scale=0.68] at (18.25,-6.5) {$20$};
\draw  plot[smooth, tension=.7] coordinates {(14,0.25) (16.25,-2.25)  (15.25,-6.5)}[postaction={decorate, decoration={markings,mark=at position .5 with {\arrow[black]{stealth}}}}];
\node[scale=0.68] at (16.75,-1) {$18$};
\end{tikzpicture}
\end{center}
\end{ex}

\begin{ex}\label{ex2}
Upward planar orders encode the information of how to draw acyclic directed graphs on the plane.
The following figure shows two different upward planar orders  associated to two different upward planar drawings of the acyclic directed graph.
$$
\begin{matrix}
\begin{matrix}
\begin{tikzpicture}[scale=0.5]
\node (v1) at (-0.5,2.5) {};
\draw[fill] (-0.5,2.5) circle [radius=0.1];
\node (v2) at (-2.5,0) {};
\draw[fill](-2.5,0) circle [radius=0.1];
\node (v3) at (1.5,0) {};
\draw[fill] (1.5,0) circle [radius=0.1];
\node (v4) at (-0.5,0.5) {};
\draw[fill] (-0.5,0.5) circle [radius=0.1];
\node (v5) at (1.5,-2) {};
\draw[fill] (1.5,-2) circle [radius=0.1];
\node (v6) at (3.5,0.5) {};
\draw[fill] (3.5,0.5) circle [radius=0.1];
\draw  (-0.5,2.5) -- (-2.5,0)[postaction={decorate, decoration={markings,mark=at position .5 with {\arrow[black]{stealth}}}}];
\node [scale=0.8][above] at (-1.9,1.3) {$1$};
\draw  (-0.5,2.5) -- (1.5,0)[postaction={decorate, decoration={markings,mark=at position .5 with {\arrow[black]{stealth}}}}];
\node [scale=0.8][above] at (0.9,1.3) {$3$};
\draw  (-0.5,0.5) --  (1.5,-2)[postaction={decorate, decoration={markings,mark=at position .5 with {\arrow[black]{stealth}}}}];
\node[scale=0.8] [below] at (0.2,-0.7) {$2$};
\draw  (3.5,0.5) --  (1.5,-2)[postaction={decorate, decoration={markings,mark=at position .5 with {\arrow[black]{stealth}}}}];
\node[scale=0.8] [below] at (2.6,-0.7) {$4$};
\end{tikzpicture}
\end{matrix}&&&&&&&&&
\begin{matrix}
\begin{tikzpicture}[scale=0.5]

\node (v1) at (-0.5,2.5) {};
\draw[fill] (-0.5,2.5) circle [radius=0.11];
\node (v2) at (-2.5,0) {};
\draw[fill](-2.5,0) circle [radius=0.11];

\node (v3) at (1.5,0) {};
\draw[fill] (1.5,0) circle [radius=0.1];
\node (v4) at (-0.5,0.5) {};
\draw[fill] (3.5,0.5) circle [radius=0.1];
\node (v5) at (5.5,-2) {};
\draw[fill] (5.5,-2) circle [radius=0.1];
\node (v6) at (7.5,0.5) {};
\draw[fill] (7.5,0.5) circle [radius=0.1];
\draw  (-0.5,2.5) -- (-2.5,0)[postaction={decorate, decoration={markings,mark=at position .5 with {\arrow[black]{stealth}}}}];
\node[scale=0.8] [above] at (-1.9,1.3) {$1$};
\draw  (-0.5,2.5) -- (1.5,0)[postaction={decorate, decoration={markings,mark=at position .5 with {\arrow[black]{stealth}}}}];
\node [scale=0.8][above] at (0.9,1.3) {$2$};
\draw  (3.5,0.5) --  (5.5,-2)[postaction={decorate, decoration={markings,mark=at position .5 with {\arrow[black]{stealth}}}}];
\node[scale=0.8] [below] at (4.2,-0.7) {$3$};
\draw  (7.5,0.5) --  (5.5,-2)[postaction={decorate, decoration={markings,mark=at position .5 with {\arrow[black]{stealth}}}}];
\node[scale=0.8] [below] at (6.6,-0.7) {$4$};
\end{tikzpicture}
\end{matrix}
\end{matrix}
$$
\end{ex}

Now we give a more coherent definition as follows.
\begin{defn}\label{upo1}
An \textbf{upward planar order} on acyclic directed graph $G$ is a linear order $\prec$ on the edge set $E(G)$, such that

$(Q1)$ $e_1\rightarrow e_2$ implies that $e_1\prec e_2$;

$(Q2)$ if $e_1\prec e\prec e_2$ and $e_1$, $e_2$ are adjacent, then
\begin{equation*}
\begin{cases}
I(t(e))\subseteq \overline{I(v)},&  \text{if}\ t(e_1)=t(e_2)=v;\\
O(s(e))\subseteq \overline{O(v)},&  \text{if}\ s(e_1)=s(e_2)=v;\\
\text{either}\ I(t(e))\subseteq \overline{I(v)}\  \text{or}\ O(s(e))\subseteq \overline{O(v)},&  \text{if}\ t(e_1)=s(e_2)=v.
\end{cases}
\end{equation*}
\end{defn}

The four possible configurations of $e_1\prec e\prec e_2$ with $e_1, e_2$ adjacent are shown as follows.
\begin{center}
\begin{tikzpicture}[scale=0.5]
\node (v2) at (-4.5,1.5) {};
\node (v1) at (-6,3.5) {};
\node (v3) at (-3,3.5) {};
\draw  (v1) -- (-4.5,1.5)[postaction={decorate, decoration={markings,mark=at position .5 with {\arrow[black]{stealth}}}}];
\draw  (v3) --(-4.5,1.5)[postaction={decorate, decoration={markings,mark=at position .5 with {\arrow[black]{stealth}}}}];
\node (v5) at (-4.5,2.5) {};
\node (v4) at (-4.5,4.5) {};
\draw (-4.5,4.5)-- (-4.5,2.5)[postaction={decorate, decoration={markings,mark=at position .5 with {\arrow[black]{stealth}}}}];
\draw[fill] (v2) circle [radius=0.1];
\draw[fill] (v5) circle [radius=0.1];
\node at (-6,2.5) {$e_1$};
\node at (-3,2.5) {$e_2$};
\node at (-4,4) {$e$};
\node (v7) at (3,2.5) {};
\node (v6) at (2,5) {};
\node (v8) at (4,0) {};
\draw  (2,5) --(3,2.5)[postaction={decorate, decoration={markings,mark=at position .5 with {\arrow[black]{stealth}}}}];
\draw  (3,2.5)-- (4,0)[postaction={decorate, decoration={markings,mark=at position .5 with {\arrow[black]{stealth}}}}];
\node (v9) at (4.5,5) {};
\draw  (4.5,5)-- (3,2.5)[postaction={decorate, decoration={markings,mark=at position .5 with {\arrow[black]{stealth}}}}];
\draw[fill] (v7) circle [radius=0.1];
\node (v11) at (3,3.5) {};
\node (v10) at (3.5,5.5) {};
\draw  (3.5,5.5) --(3,3.5)[postaction={decorate, decoration={markings,mark=at position .5 with {\arrow[black]{stealth}}}}];
\draw[fill] (v11) circle [radius=0.1];
\node (v13) at (9.5,2.5) {};
\node (v12) at (8,5) {};
\node (v14) at (10.5,0) {};
\draw  (8,5) -- (9.5,2.5)[postaction={decorate, decoration={markings,mark=at position .5 with {\arrow[black]{stealth}}}}];
\draw  (9.5,2.5) -- (10.5,0)[postaction={decorate, decoration={markings,mark=at position .5 with {\arrow[black]{stealth}}}}];
\draw[fill] (v13) circle [radius=0.11];
\node (v15) at (8,0) {};
\draw  (9.5,2.5) -- (8,0)[postaction={decorate, decoration={markings,mark=at position .5 with {\arrow[black]{stealth}}}}];
\node (v16) at (9.5,1.5) {};
\node (v17) at (9,-1) {};
\draw  (9.5,1.5)-- (9,-0.5)[postaction={decorate, decoration={markings,mark=at position .5 with {\arrow[black]{stealth}}}}];
\draw[fill] (v16) circle [radius=0.1];
\node (v18) at (16,3) {};
\node (v19) at (14.5,0.5) {};
\node (v20) at (17.5,0.5) {};
\node (v21) at (16,2) {};
\node (v22) at (16,-1) {};
\draw  (16,3) -- (14.5,0.5)[postaction={decorate, decoration={markings,mark=at position .5 with {\arrow[black]{stealth}}}}];
\draw (16,3) --(17.5,0.5)[postaction={decorate, decoration={markings,mark=at position .5 with {\arrow[black]{stealth}}}}];
\draw  (16,2) -- (16,-0.5)[postaction={decorate, decoration={markings,mark=at position .5 with {\arrow[black]{stealth}}}}];
\draw[fill] (v21) circle [radius=0.1];
\draw[fill] (v18) circle [radius=0.1];
\node[scale=0.8] at (2,4) {$e_1$};
\node[scale=0.8] at (4.1,1) {$e_2$};
\node[scale=0.8] at (3,5) {$e$};
\node[scale=0.8] at (8,4) {$e_1$};
\node[scale=0.8] at (10.6,1) {$e_2$};
\node[scale=0.8] at (9.5,-0.5) {$e$};
\node[scale=0.8] at (14.5,1.5) {$e_1$};
\node[scale=0.8] at (17.5,1.5) {$e_2$};
\node[scale=0.8] at (16.5,0.5) {$e$};
\node[scale=0.8] at (2.5,2.2) {$v$};
\node [scale=0.8]at (10,2.6) {$v$};
\node[scale=0.8] at (16,3.5) {$v$};
\node [scale=0.8]at (-4.5,1) {$v$};
\node[scale=0.8] at (-4.5,-2) {$I(t(e))\subseteq \overline{I(v)}$};
\node [scale=0.8]at (3,-2) {$I(t(e))\subseteq \overline{I(v)}$};
\node [scale=0.8]at (9.5,-2) {$O(s(e))\subseteq \overline{O(v)}$};
\node [scale=0.8]at (16,-2) {$O(s(e))\subseteq \overline{O(v)}$};
\end{tikzpicture}
\end{center}

\begin{thm}\label{eq}
Definition \ref{upo} and Definition \ref{upo1} are equivalent.
\end{thm}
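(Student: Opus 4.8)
The plan is to exploit that both definitions impose the \emph{same} first axiom, since $(U1)$ and $(Q1)$ are literally identical. Working under this common hypothesis (so that $\prec$ is a linear extension of $\to$, and in particular every incoming edge of a vertex precedes every outgoing one by $(Q1)$), it remains to show that the pair $(U2)\wedge(U3)$ holds if and only if $(Q2)$ holds. I would prove the two implications separately, and throughout I would use the reformulation of $(U3)$ noted just after Definition~\ref{upo}, namely that $I(v_1)\subseteq\overline{I(v_2)}$ may be used in place of $\overline{I(v_1)}\subseteq\overline{I(v_2)}$, and dually for $O$; I would also repeatedly invoke the trivial membership facts $e\in I(t(e))$ and $e\in O(s(e))$.

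For the implication $(U2)\wedge(U3)\Rightarrow(Q2)$, I would treat the three configurations of $e_1\prec e\prec e_2$ in turn. When $t(e_1)=t(e_2)=v$, both $e_1,e_2\in I(v)$, so $e$ lies in the interval $[e_1,e_2]\subseteq\overline{I(v)}$; since also $e\in I(t(e))$, the set $I(t(e))\cap\overline{I(v)}$ is nonempty and $(U3)$ yields $I(t(e))\subseteq\overline{I(v)}$. The case $s(e_1)=s(e_2)=v$ is dual, using the $O$-half of $(U3)$. When $t(e_1)=s(e_2)=v$ the vertex $v$ is processive, so $(U2)$ gives $\overline{E(v)}=\overline{I(v)}\sqcup\overline{O(v)}$ with $O(v)^-=I(v)^++1$; since $e_1\in I(v)$ and $e_2\in O(v)$ we have $e\in\overline{E(v)}$, hence $e$ lies in exactly one of $\overline{I(v)}$, $\overline{O(v)}$, and the corresponding half of $(U3)$ delivers the required alternative.

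For the converse $(Q1)\wedge(Q2)\Rightarrow(U2)\wedge(U3)$, I would first dispatch $(U3)$, which turns out to be the cleaner part. Assuming $I(v_1)\cap\overline{I(v_2)}\neq\emptyset$ with $v_1\neq v_2$ (the case $v_1=v_2$ being trivial), pick $f\in I(v_1)\cap\overline{I(v_2)}$. The decisive observation is that $t(f)=v_1\neq v_2$ forces $f\notin I(v_2)$, so $f$ differs from both endpoints $I(v_2)^-,I(v_2)^+\in I(v_2)$; combined with $I(v_2)^-\preceq f\preceq I(v_2)^+$ this gives the \emph{strict} chain $I(v_2)^-\prec f\prec I(v_2)^+$. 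Applying the first case of $(Q2)$ to $e_1=I(v_2)^-$, $e=f$, $e_2=I(v_2)^+$ (adjacent at $v_2$) then yields $I(t(f))=I(v_1)\subseteq\overline{I(v_2)}$, which is exactly $(U3)$; the $O$-statement is dual via the second case of $(Q2)$.

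For $(U2)$, disjointness $\overline{I(v)}\cap\overline{O(v)}=\emptyset$ and the fact that $\overline{I(v)}$ lies entirely below $\overline{O(v)}$ follow at once from $(Q1)$, since every edge of $I(v)$ reaches every edge of $O(v)$ through $v$. The only remaining point is to exclude a gap, i.e.\ to show $O(v)^-=I(v)^++1$ for processive $v$. I would argue by contradiction: if some edge $e$ satisfied $I(v)^+\prec e\prec O(v)^-$, then applying the third case of $(Q2)$ to $e_1=I(v)^+$, $e_2=O(v)^-$ (adjacent at $v$) would give either $e\in I(t(e))\subseteq\overline{I(v)}$, forcing $e\preceq I(v)^+$, or $e\in O(s(e))\subseteq\overline{O(v)}$, forcing $e\succeq O(v)^-$ --- a contradiction in either case. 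This gap-exclusion step, together with the strictness observation used for $(U3)$, is where essentially all the care is needed; the rest is bookkeeping about interval endpoints. I expect the main obstacle to be making precise why the middle edge $e$ must fall into one of the two convex hulls in the forward direction and, dually, why no edge can occupy the gap in the converse, both of which hinge on the equality $O(v)^-=I(v)^++1$ encoded by $(U2)$.
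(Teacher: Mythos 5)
Your proof is correct and follows essentially the same route as the paper: the identification $(U1)=(Q1)$, the three-case analysis in the forward direction using $(U3)$ for the two same-endpoint cases and the $(U2)$ decomposition $\overline{E(v)}=\overline{I(v)}\sqcup\overline{O(v)}$ to split the mixed case, and, in the converse, the first two cases of $(Q2)$ for $(U3)$ plus the gap-exclusion contradiction via the third case of $(Q2)$ to get $O(v)^-=I(v)^++1$. If anything, you are slightly more careful than the paper, which asserts the strict chain $I(v_2)^-\prec e\prec I(v_2)^+$ without noting, as you do, that the endpoint cases force $v_1=v_2$ and are trivial, and which leaves the disjointness half of $(U2)$ (a consequence of $(Q1)$) implicit.
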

\begin{proof}
Clearly, $(U1)=(Q1)$. We only need to show that $(U2)+(U3)\Longleftrightarrow (Q2)$.

$(\Longleftarrow)$. To show $(Q2)\Longrightarrow(U2)$, we only need to show that for any processive vertex $v$, $O(v)^-=I(v)^++1$. We prove this by contradiction. Suppose there exist a processive vertex $v$ and an edge $e$ such that $I(v)^+\prec e \prec O(v)^-$. Clearly, $v=t(I(v)^+)=s(O(v)^-)$, then by $(Q2)$, we have either $e\in I(t(e))\subseteq \overline{I(v)}$ or $e\in O(s(e))\subseteq\overline{O(v)}$, both of which  contradict $I(v)^+\prec e \prec O(v)^-$.

Now we show $(Q2)\Longrightarrow(U3)$. Assume $I(v_1)\cap \overline{I(v_2)}\neq \emptyset$, then there exists $e\in I(v_1)$ such that $I(v_2)^-\prec e \prec I(v_2)^+$. Clearly, $v_2=t(I(v_2)^-)=t(I(v_2)^+)$, then by $(Q2)$, we have $I(v_1)=I(t(e))\subseteq \overline{I(v_2)}$. Similarly, $O(v_1)\cap \overline{O(v_2)}\neq \emptyset$ means that there exists $e\in O(v_1)$ such that $O(v_2)^-\prec e \prec O(v_2)^+$. By $v_2=s(O(v_2)^-)=s(O(v_2)^+)$ and $(Q2)$, we have $O(v_1)=O(s(e))\subseteq \overline{O(v_2)}$.

$(\Longrightarrow)$. Assume $e_1\prec e\prec e_2$ and $e_1$, $e_2$ are adjacent, then there are three cases.

\noindent\textbf{Case 1:} $t(e_1)=t(e_2)=v$. In this case, $e\in I(t(e))\cap\overline{I(v)}$, by $(U3)$, we have $I(t(e))\subseteq\overline{I(v)}$.

\noindent\textbf{Case 2:} $s(e_1)=s(e_2)=v$. In this case, $e\in O(s(e))\cap\overline{O(v)}$, by $(U3)$, we have $I(s(e))\subseteq\overline{O(v)}$.

\noindent\textbf{Case 3:} $t(e_1)=s(e_2)=v$. In this case, $e\in \overline{E(v)}$, then by $(U2)$, we have either $e\in \overline{I(v)}$ or $e\in \overline{O(v)}$, which reduces the proof to case 1 or case 2, respectively.
\end{proof}


\textbf{Ting  Li}\hfill \\  Email:liting810505@163.com

\textbf{Xuexing Lu}\hfill \\  Email: xxlu@uzz.edu.cn

\end{document}